\documentclass[11pt,reqno]{amsart}
\usepackage[margin=1in]{geometry}

\usepackage{amssymb,amsmath,amsthm}
\usepackage{microtype}
\usepackage{mathtools}
\usepackage{enumitem}

\usepackage[colorlinks=true, pdfstartview=FitH, linkcolor=blue, citecolor=blue, urlcolor=blue]{hyperref}





\newcommand{\eps}{\varepsilon}
\newcommand{\defeq}{\vcentcolon=}

\newcommand{\E}{\mathbb E}
\newcommand{\Var}{\mathrm{Var}}

\renewcommand{\leq}{\leqslant}
\renewcommand{\le}{\leqslant}
\renewcommand{\geq}{\geqslant}
\renewcommand{\ge}{\geqslant}

\newcommand{\N}{\mathbb{N}}
\newcommand{\R}{\mathbb{R}} 
\newcommand{\Z}{\mathbb{Z}}

\newtheorem{theorem}{Theorem}[section]

\newtheorem{lemma}[theorem]{Lemma}

\theoremstyle{definition}

\newtheorem{remark}[theorem]{Remark}

\numberwithin{theorem}{section}

\allowdisplaybreaks

\begin{document}

\title{The extensible no-four-on-a-circle problem}

\author{Anubhab Ghosal}
\author{Ritesh Goenka}

\address{Mathematical Institute \\ University of Oxford \\ Oxford, UK OX2 6GG}
\email{\{ghosal,goenka\}@maths.ox.ac.uk}

\subjclass[2020]{05D40, 52C35}
\keywords{no-four-on-a-circle, extensible}

\begin{abstract}
    We show that there exists a set $S \subset \mathbb{Z}^2$ containing no four points on a circle or a line such that $|S \cap [n]^2| = \Omega(n)$ as $n \rightarrow \infty$. Since any no-four-on-a-circle set in $[n]^2$ has size $O(n)$, this resolves (up to a constant) a question raised by the current authors and Keevash concerning the density of extensible no-four-on-a-circle constructions. Our construction is based on weighted random sampling from the integer lattice followed by careful deletion.
\end{abstract}

\maketitle

\section{Introduction}

Determining the maximum size of a subset of $[n]^2$ lacking specific geometric configurations has a rich history in discrete geometry (see the recent survey~\cite{JNNS}), tracing back to the classical no-three-in-line problem introduced by Dudeney~\cite{Dud}. More recently, \emph{extensible} versions of such questions have been considered, where one seeks subsets of $\mathbb{Z}^2$ of large (appropriately scaled) density avoiding these geometric configurations. For instance, Erde~\cite[Problem 5]{Open} asked if there is a set $S \subset \mathbb{Z}^2$ without collinear triples of points satisfying $|S \cap [n]^2| = \Omega(n)$ for all sufficiently large $n$. All known constructions of no-three-in-line subsets of $[n]^2$ with $\Omega(n)$ points are derived from subsets of $\mathbb{F}_p^2$ with $p \approx n$, and therefore cannot be easily extended to the entire integer lattice. See~\cite[Problem 72]{Green} for related discussion and \cite{Gho,NNW} for partial progress on this problem. See also \cite{GJMN} for the extensible version of the no-$(k+1)$-in-line problem for general $2 \le k \le n$.

Erd\H{o}s and Purdy asked the no-four-on-a-circle problem: determine the maximum number $f(n)$ of points that can be chosen from $[n]^2$ with no four of them concyclic or collinear. It is known that $f(n) = \Theta(n)$, with the current best-known lower bound $f(n) \ge 2n - o(n)$ due to the present authors, Grebennikov, Keevash, Kwan, and Pham~\cite{GGGKKP}, and upper bound $f(n) \le 5n/2 - 3/2$ due to Thiele~\cite{Thi2}. Similarly to the no-three-in-line problem, until recently, all known constructions demonstrating $f(n) = \Omega(n)$ were algebraic in nature~\cite{DX,Thi}. The current authors and Keevash~\cite{GGK} provided the first randomised construction of size $\Omega(n)$ for this problem and proposed studying the related extensible version. In this paper, we prove the following theorem, which resolves the extensible no-four-on-a-circle problem up to a constant.

\begin{theorem}
\label{thm:main}
    There exist constants $c > 0$, $n_0 \in \N$, and a set $S \subset \Z^2$ such that no four points of $S$ lie on a circle or a line and $|S \cap [n]^2| \ge cn$ for all $n \ge n_0$.
\end{theorem}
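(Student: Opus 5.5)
We will build $S$ by sampling each point $x \in \Z^2 \setminus \{0\}$ independently with probability $p_x = \eps/\|x\|_\infty$, where $\eps>0$ is a small constant. Then $\E|S\cap([n]^2\setminus[n/2]^2)| \asymp \eps n$. The factor $1/\|x\|$ makes the density scale-invariant, so each dyadic annulus $A_k=\{x: 2^{k-1}<\|x\|_\infty\le 2^k\}$ contributes about $\eps 2^k$ points. After sampling, we delete one point from every \emph{bad quadruple}: four sampled points that are concyclic or collinear. Because deletions may remove points far from where a quadruple is "charged", we delete in a structured way. For each bad quadruple we remove its point of largest norm. Then the number of points removed from $A_k$ is at most the number of bad quadruples whose maximum-norm point lies in $A_k$. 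It suffices to show that this quantity has expectation at most $\eps 2^k/10$, and that it is concentrated enough, or summable, that almost surely only finitely many $k$ fail. At scale $k$ we also need a lower bound $|S_{\rm sampled}\cap A_k|\ge \eps 2^k/2$ for all large $k$, which follows from Chernoff bounds and Borel--Cantelli. Given both bounds, for $n$ between $2^k$ and $2^{k+1}$ we get $|S\cap[n]^2|\ge |S\cap A_k \cap [n]^2| \ge c n$. Finitely many exceptional scales are handled by deleting all of $S\cap[2^{k_0}]^2$, which only affects small $n$; hence the $n_0$ in Theorem~\ref{thm:main}.

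The main step is the first-moment bound on bad quadruples with top point in $A_k$:
\[
\sum_{\substack{\{x_1,\dots,x_4\}\ \text{concyclic/collinear}\\ \max\|x_i\| \in (2^{k-1},2^k]}} \prod_{i} \frac{\eps}{\|x_i\|_\infty} \;\le\; C\eps^4 2^k.
\]
Choosing $\eps$ small then makes this at most $\eps 2^k/10$. We split according to the dyadic scales $j_1\le j_2\le j_3\le k$ of the other three points, and sum $\eps^4 2^{-j_1-j_2-j_3-k}\cdot N(j_1,j_2,j_3,k)$. Here $N$ counts concyclic quadruples with points in $A_{j_1},A_{j_2},A_{j_3},A_k$. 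For this we use the standard incidence count from \cite{GGK}: three points determine a circle, and the number of lattice points on a circle through a lattice region of radius $R$ is $R^{o(1)}$. This gives the crude bound $N\le 2^{2(j_1+j_2+j_3)}\cdot 2^{o(k)}$. That bound is too weak when $j_1,j_2,j_3$ are all near $k$, where it gives $2^{3k}$ against the needed $2^{k}\cdot 2^{3k}/\ldots$. So the more efficient count is to fix the two \emph{smallest} points and the largest point, and then count the fourth. Concretely, the number of concyclic quadruples in $[N]^2$ is $O(N^{5+o(1)})$ (indeed the known bound is $\sim N^5\log N$-type, as used in \cite{GGGKKP,GGK}), and the weighting $\prod 1/\|x_i\|$ contributes $N^{-4}$. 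This gives $N^{1+o(1)}$ per scale.

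The obstacle is precisely this $N^{o(1)}$ (or logarithmic) loss, which would give $\eps^4 2^k\log 2^k$ rather than $O(\eps^4 2^k)$. To beat it, we would first try modifying the weights to $p_x=\eps/(\|x\|\,\log^{\delta}\|x\|)$. This fails the linear target, so instead we need the sharp estimate that the weighted count of concyclic quadruples is $O(N)$ at scale $N$ once degenerate configurations are handled. These include circles of huge radius (nearly collinear points) and quadruples with points at very different scales. Our approach is to sum over circles by radius $r$ and center, bounding $\sum_{\text{circles}} (\sum_{x\in C} p_x)^4$. Circles with many lattice points are rare, and we control them via second-moment estimates for $r_2(m)$ (e.g. $\sum_{m\le X} r_2(m)^4 \ll X\log^{7}X$). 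The decay of $p_x$ should absorb logarithms across scales, since a quadruple spanning scales $j<k$ is penalised by $2^{-(k-j)}$. Lines are handled similarly and more easily. If the full moment bound still loses a logarithm, we would restrict the sampling to a thin sector or a sparse set of residue classes, such as points $x$ with $x_1\equiv 0 \pmod q$ for a fixed $q$, chosen to kill the rich circles. This is the step we expect to be the genuine difficulty.
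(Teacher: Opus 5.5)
Your framework is the paper's: sample with $p_x=\eps/\|x\|_\infty$, delete the largest-norm point of each bad quadruple, and work scale by dyadic scale. But there is a genuine gap at exactly the step that carries the theorem. The log-free bound $O(\eps^4 2^k)$ on the weighted number of bad quadruples at scale $2^k$ is never proved.

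Your heuristic of multiplying an $O(N^{5+o(1)})$ count by ``$N^{-4}$'' is not valid, because points at small scales carry weight far larger than $1/N$. The other routes you propose also fail to close it. Fixing three points and counting lattice points on their circle incurs the $N^{o(1)}$ loss. Summing fourth moments of $r_2$ over circles incurs a logarithmic loss. You concede that either loss destroys linear density.

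The missing idea is to split cyclic quadruples into isosceles trapezia and asymmetric ones.
\begin{itemize}
\item \emph{Trapezia.} These, like collinear quadruples, can be counted with no arithmetic loss. Fix the smallest point $x_1\in R_{t_1}$ and the axis direction $v$ with $\|v\|_\infty=m$ ($O(m)$ choices). The mate of $x_1$ lies on $x_1+\Z v^\perp$ ($O(2^{t_j}/m)$ choices). The other pair is determined by one point in a coset of $\Z\frac{v}{2}+\Z\frac{v^\perp}{2}$ ($O(2^{2t_k}/m^2+1)$ choices). This gives $O(\alpha^4 2^{t_1})$ for each scale pattern $t_1\le t_2\le t_3\le t_4$, with a convergent sum over $m$, hence $O(\alpha^4 2^T)$ in total.
\item \emph{Asymmetric quadrilaterals.} Interpolate the Huxley--Konyagin bound $|\mathcal{A}_D|=O_\eps(D^{2+18/29+\eps})$ with the trivial bound $O_\eps(\Delta^{4+\eps})$ when $d_3\le\Delta$. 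Combine this with $\E Z(S)\lesssim\alpha^4(1+\log d_3)/(d_3d_4)$ for well-oriented translates. The result is $O(\alpha^4 2^{0.98T})$, and this power saving absorbs every $N^{\eps}$ loss.
\end{itemize}
So no logarithm ever has to be beaten, and your reweighting and residue-class fallbacks are unnecessary.

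There is a second gap in the probabilistic step. A first-moment bound $C\eps^4 2^k$ gives, via Markov, only a failure probability of order $\eps^3$ at each scale. That is not summable, so Borel--Cantelli does not make all large scales good simultaneously. The paper closes this with second-moment bounds $\Var(Y_T^{\mathrm{lin}}),\Var(Y_T^{\mathrm{sym}})=O((T+1)^4 2^T)$. These come from pointwise bounds $\Gamma_T(x),\tilde\Gamma_T(x)=O(\alpha^3(T+1)^4)$ and $D_T(x,y)=O(\alpha^2(T+1))$ on the weighted number of bad quadruples through one or two fixed points. Chebyshev then gives summable failure probabilities $O((T+1)^4/(\alpha^8 2^T))$. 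For asymmetric quadruples, the $2^{0.98T}$ saving already makes plain Markov summable.
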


At a high level, our construction follows the same strategy as the recent work by the first-named author~\cite{Gho}. We first select each point $x \in \mathbb{N}^2$ independently with probability $\alpha/\|x\|_\infty$ for some small constant $\alpha > 0$, to obtain a random set $Q$. We then delete the point with the largest infinity norm from each concyclic or collinear quadruple of points in $Q$ to obtain the set $S$. Finally, we show that the set $S$ so obtained satisfies the conclusion of Theorem~\ref{thm:main} with positive probability for some small enough absolute constant $c > 0$.

The proof relies on expectation and variance estimates for the number of concyclic quadruples of selected points in $[n]^2$. For the case of uniform weights, this expectation was already estimated in \cite{GGK}. The analysis there proceeds by grouping concyclic quadruples into two kinds: ones that come from isosceles trapezia and ones that do not. The contribution of the former was carefully estimated, whereas that of the latter was shown to be a lower-order term using a number-theoretic result of Huxley and Konyagin~\cite{HK}. Our analysis follows a similar template, albeit the computations are more technical due to the non-uniformity of weights.

\begin{remark}
    Theorem~\ref{thm:main} also gives a linear lower bound for the extensible no-four-in-line problem. More generally, for every fixed integer $k \ge 3$, there exists a set $S_k \subset \Z^2$ with no $k+1$ points on a line such that $|S_k \cap [n]^2| = \Omega(kn)$ as $n \rightarrow \infty$. Indeed, let $r = \lfloor k/3 \rfloor$ and $e_1 = (1,0)$, and take
    \begin{equation*}
        S_k = \bigcup_{j=0}^{r-1} (S + je_1),
    \end{equation*}
    where $S$ is the set in Theorem~\ref{thm:main}. Every line meets $S_k$ in at most $3r\le k$ points. Moreover, each point belongs to at most three of these translates, since otherwise we would get four points in $S$ that lie on a horizontal line. Consequently, we obtain
    \begin{equation*}
        |S_k \cap [n]^2| \ge \frac{r}{3} |S \cap [n-r+1]^2| \ge \frac{cr}{3} (n-r+1) = \Omega(kn)
    \end{equation*}
    as $n \rightarrow \infty$. This matches the elementary upper bound $kn$ up to an absolute constant. G\'abriel, J\'anosik, Melj\'an, and N\'ador~\cite{GJMN} recently improved upon this trivial upper bound by showing that any extensible no-$(k+1)$-in-line set $S \subset \Z^2$ satisfies
    \begin{equation*}
        \liminf_{n \rightarrow \infty} \frac{|S \cap [n]^2|}{n} < 0.897 k.
    \end{equation*}
\end{remark}

The remainder of this paper is organised as follows. In Section~\ref{sec:proof}, we prove the main result conditional on the aforementioned expectation and variance estimates. We then prove the relevant expectation and variance estimates for collinear quadruples, symmetric concyclic quadruples, and asymmetric concyclic quadruples in Sections~\ref{sec:lin}, \ref{sec:sym}, and \ref{sec:asym}, respectively.

\vspace{8pt}

\paragraph{\textbf{Asymptotic notation.}}
For nonnegative quantities $a$ and $b$, we write $a = O(b)$ or $a \lesssim b$ if $a \le Cb$ for some absolute constant $C > 0$, uniformly over the stated ranges of the parameters. We write $a = \Omega(b)$ or $a \gtrsim b$ if $b = O(a)$, and $a = \Theta(b)$ or $a \asymp b$ if both
$a = O(b)$ and $b = O(a)$. Dependence of the implicit constant on some parameters is indicated by subscripts, as in $O_\varepsilon$ or $\lesssim_\varepsilon$. When a limit such as $n \to \infty$ is specified, the corresponding comparisons are required to hold only for all sufficiently large $n$, and the threshold may depend on parameters held fixed in that limit.

\vspace{8pt}

\paragraph{\textbf{Statement of AI use.}}
ChatGPT~5.4--5.6 Pro and Gemini~3.1 Pro were used to assist the authors in carrying out and simplifying the computations required to prove the various parts of Lemma~\ref{lem:main}. In addition, ChatGPT~6 Pro was used to polish the final draft of the paper. The high-level proof ideas, including those behind Lemma~\ref{lem:main}, are entirely due to the authors. The authors assume full responsibility for the contents of the paper.

\section*{Acknowledgments}

AG is supported by a joint Clarendon Fund and Oxford Ryniker Lloyd Graduate Scholarship. RG is supported by a joint Clarendon Fund and Exeter College SKP scholarship.

\section{Proof of the main result}
\label{sec:proof}

We begin with a definition. A \emph{bad quadruple} refers to an unordered $4$-tuple of distinct points in $\mathbb{Z}^2$ that is either collinear or concyclic. We partition bad quadruples into three classes:
\begin{itemize}
    \item collinear quadruples,
    \item symmetric cyclic quadruples, and
    \item asymmetric cyclic quadruples.
\end{itemize}
Symmetric cyclic quadruples correspond to vertices of isosceles trapezia, while asymmetric cyclic quadruples correspond to vertices of other cyclic quadrilaterals.

For $\alpha \in (0, 1)$, let $Q \defeq Q(\alpha)$ be a random subset of $\mathbb{Z}^2$ obtained by selecting each point $x \in \mathbb{N}^2$ independently with probability $p_x \defeq \alpha/\|x\|_\infty$. Further, for an integer $T \ge 0$, let $B_T$ and $R_T$ denote the dyadic box and dyadic shell defined by
\begin{equation*}
    B_T \defeq [2^T]^2 \quad \text{ and } \quad R_T \defeq B_T \setminus B_{T-1},
\end{equation*}
respectively, where $B_{-1}$ is taken to be the empty set. Let $X_T \defeq X_T(\alpha)$ denote the number of points in $Q \cap B_T$. In the following lemma, we record basic expectation and variance estimates for $X_T$.

\begin{lemma}
\label{lem:Xcount}
    For $\alpha \in (0, 1)$ and integer $T \ge 0$, we have $\alpha 2^T \le \E X_T \le \alpha 2^{T+1}$ and $\Var(X_T) \le \E X_T$.
\end{lemma}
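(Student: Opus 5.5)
Since $X_T=\sum_{x\in B_T}\mathbf{1}[x\in Q]$ is a sum of independent Bernoulli indicators with parameters $p_x=\alpha/\|x\|_\infty$ (note $p_x\le\alpha<1$ since $\|x\|_\infty\ge 1$ on $\N^2$), the plan is to compute $\E X_T$ exactly by grouping points according to their infinity norm, and then to get the variance bound from independence.

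\textbf{Expectation.} For $k\ge 1$, the number of $x\in\N^2$ with $\|x\|_\infty=k$ is $k^2-(k-1)^2=2k-1$. Hence
\begin{equation*}
    \E X_T=\sum_{k=1}^{2^T}(2k-1)\frac{\alpha}{k}=\alpha\Bigl(2\cdot 2^T-\sum_{k=1}^{2^T}\frac1k\Bigr).
\end{equation*}
The upper bound $\E X_T\le \alpha 2^{T+1}$ follows immediately, since the harmonic sum is positive. For the lower bound we need $\sum_{k=1}^{N}1/k\le N$ with $N=2^T$. This holds because each term is at most $1$; if $T=0$ it is an equality, giving $\E X_0=\alpha=\alpha 2^0$. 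So $\E X_T\ge \alpha(2N-N)=\alpha 2^T$.

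\textbf{Variance.} By independence,
\begin{equation*}
    \Var(X_T)=\sum_{x\in B_T}p_x(1-p_x)\le\sum_{x\in B_T}p_x=\E X_T.
\end{equation*}

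There is no real obstacle here. The only points needing care are the shell count $2k-1$ and the edge case $T=0$ in the lower bound, which is handled by the trivial estimate $H_N\le N$.
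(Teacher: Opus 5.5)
Your proof is correct and follows essentially the same route as the paper: you count the $2k-1$ points of infinity norm $k$, write $\E X_T=\alpha\sum_{k=1}^{2^T}(2-1/k)$, bound each term between $1$ and $2$, and obtain the variance bound from independence via $\sum_{x\in B_T} p_x(1-p_x)\le\sum_{x\in B_T} p_x$.
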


\begin{proof}
    For each $r \in \N$, there are $2r-1$ points $x \in \N^2$ with $\|x\|_\infty = r$. Therefore, we have
    \begin{equation*}
        \E|Q \cap B_T| = \sum_{x \in B_T} \frac{\alpha}{\|x\|_\infty} = \sum_{r=1}^{2^T} (2r-1) \frac{\alpha}{r} = \alpha \sum_{r=1}^{2^T} \left(2 - \frac{1}{r}\right) \in [ \alpha 2^T, \alpha 2^{T+1}].
    \end{equation*}
    The variance estimate holds because $X_T$ can be expressed as a sum of independent indicator random variables $(\mathbf{1}\{x \in Q\})_{x \in B_T}$.
\end{proof}

Let $Y_T^{\mathrm{lin}} \defeq Y_T^{\mathrm{lin}}(\alpha)$, $Y_T^{\mathrm{sym}} \defeq Y_T^{\mathrm{sym}}(\alpha)$, and $Y_T^{\mathrm{asym}} \defeq Y_T^{\mathrm{asym}}(\alpha)$ denote the number of collinear quadruples, symmetric cyclic quadruples, and asymmetric cyclic quadruples in $Q \cap B_T$, respectively. In the following lemma, we provide expectation and variance estimates for these random variables.

\begin{lemma}
\label{lem:main}
    For $\alpha \in (0, 1)$ and integer $T \ge 0$, we have
    \begin{enumerate}
        \item $\E Y_T^{\mathrm{lin}} = O(\alpha^4 2^T)$,
        \item $\Var(Y_T^{\mathrm{lin}}) = O((T+1)^4 2^T)$,
        \item $\E Y_T^{\mathrm{sym}} = O(\alpha^4 2^T)$,
        \item $\Var(Y_T^{\mathrm{sym}}) = O((T+1)^4 2^T)$, and
        \item $\E Y_T^{\mathrm{asym}} = O(\alpha^4 2^{0.98 T})$.
    \end{enumerate}
\end{lemma}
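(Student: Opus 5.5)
The plan is to handle all five parts by direct first- and second-moment computations over the relevant configurations. Everything is organised dyadically by the infinity norms of the points involved. For a configuration $x_1,\dots,x_4$, the probability that all four points lie in $Q$ is $\alpha^4\prod_i \|x_i\|_\infty^{-1}$. The basic device is therefore to fix the dyadic shells $R_{t_1},\dots,R_{t_4}$ (with $t_i \le T$) containing the points and to count configurations with points in those shells, each weighted by $\prod_i 2^{-t_i}$. I will order the points so that $t_1\ge t_2\ge t_3\ge t_4$. The goal in each case is a bound of the shape
\[
\#\{\text{configurations}\}\cdot 2^{-(t_1+t_2+t_3+t_4)} \lesssim 2^{t_1}\, 2^{-\delta(t_1-t_4)} \quad\text{or}\quad \lesssim 2^{t_1}\cdot\mathrm{poly}(t_1-t_4).
\]
Summing over shells then gives $O(\alpha^4 2^T)$, because the sum over $t_1\le T$ is geometric. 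If only polynomial-in-$T$ factors are available, they are absorbed into the $(T+1)^4$ allowed in the variance bounds.

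\textbf{Collinear quadruples (parts (1), (2)).} I will parametrise a line through $x_4$ (the point of smallest norm) by a primitive direction $v$. Points on that line in a box of side $2^{t}$ number $\lesssim 2^{t}/\|v\|+1$. Summing over $x_4\in R_{t_4}$, over primitive $v$, and over the positions of the other three points on the line gives the count. The key estimate is
\[
\sum_v \prod_{i=1}^{3}\Bigl(\frac{2^{t_i}}{\|v\|}+1\Bigr)\,2^{-t_i},
\]
which converges after multiplying by $|R_{t_4}|2^{-t_4}\approx 2^{t_4}$. The dominant range is $\|v\|\lesssim 2^{t_3}$, and the sum over $t$'s is then $O(2^T)$. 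For the variance, I will write $\Var(Y)=\sum \mathrm{Cov}$ over pairs of quadruples sharing $k\in\{1,2,3,4\}$ points. The contribution from sharing $k$ points is at most the expected number of configurations that are unions of two bad quadruples overlapping in $k$ points. For $k\ge 2$ the second line is determined by the first, so the count reduces to a collinear $(8-k)$-tuple. For $k=1$ the configuration is a pair of lines through a common point. I will bound these with $\alpha<1$ and crude shell counting; the losses are harmonic sums $\sum_{t}1$ over shell indices, giving at most $(T+1)^4$.

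\textbf{Symmetric cyclic quadruples (parts (3), (4)).} An isosceles trapezium is determined by its axis of symmetry $\ell$ together with two pairs $\{a,a'\},\{b,b'\}$ of reflected points. Since the points are integral, $a-a'$ and $b-b'$ are parallel integer vectors perpendicular to $\ell$, both multiples of a primitive $w$. I will parametrise by $w$, the positions of $a$ and $b$, and the integer multiples. This mirrors the collinear count but with one more free parameter, the separation along the axis, and the rest follows the \cite{GGK} computation with dyadic weights. The variance is handled identically via overlaps sharing $k$ points. For $k\ge 3$ the circle is determined, and a concyclic configuration through three given lattice points has boundedly many relevant extensions once weighted.

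\textbf{Asymmetric cyclic quadruples (part (5)), the main obstacle.} Here the plan is not to exploit the weights finely. I will use the crude bound $p_x\le\alpha/\|x\|_\infty$ in the following way: any quadruple in $B_T$ has its largest point of norm $\asymp 2^{t_1}$, and all four lie in $B_{t_1}$. I will then invoke the \cite{GGK} argument based on Huxley--Konyagin~\cite{HK}. That argument shows the number of asymmetric concyclic quadruples in $[N]^2$ is $O(N^{5-\eta})$ for some $\eta>0$, and more precisely bounds weighted sums over circles of $r(C)^4$-type counts with a power saving. The difficulty is that weights $\prod\|x_i\|^{-1}$ favour small points, so uniform counting in $B_{t_1}$ with weight $2^{-\sum t_i}$ is not enough when $t_4\ll t_1$. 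I will therefore split by shells and use, for the points in smaller shells, that a circle meets $B_{t}$ in few lattice points unless its radius is small. Specifically, I will use Huxley--Konyagin's bound on lattice points on short arcs and on asymmetric quadruples, scaled to sub-boxes. This should give a bound of the form $2^{t_1(1-\eta')}$ per shell tuple, up to polynomial losses; the target exponent $0.98$ is chosen to leave room for these losses. I expect verifying this exponent bookkeeping, that is, the power saving surviving the heterogeneous shell sizes, to be the hardest step.
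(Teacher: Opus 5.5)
Your outline of parts (a)--(d) follows the paper's template: dyadic shells, lines through the smallest point parametrised by a primitive direction, and the variance split by $|\tau\cap\tau'|$, with collinear overlaps of size $k\ge 2$ reducing to collinear $(8-k)$-tuples. Several steps are only asserted, though, and need to be supplied.
\begin{enumerate}
\item The restriction $\|v\|_\infty<2^{t_3}$ is forced, not merely ``dominant''. Without it your sum over $v$ diverges because of the $+1$ terms.
\item The paper controls the $k=1$ terms by the pointwise bounds $\Gamma_T(x),\tilde\Gamma_T(x)=O(\alpha^3(T+1)^4)$, together with $V_1\le\max_x\Gamma_T(x)\sum_x p_x\Gamma_T(x)$.
\item For symmetric quadruples, $k=2$ is not ``identical'' to the collinear case, since two trapezia sharing two vertices do not determine each other. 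One needs $D_T(x,y)=O(\alpha^2(T+1))$.
\item The symmetric first moment needs the fact that, once $x_1$, the axis direction $v$ and the mate of $x_1$ are fixed, the other reflected pair lies in a coset of $\tfrac12\Z v+\tfrac12\Z v^\perp$. This gives $O(2^{2t}/m^2+1)$ choices, not a free extra parameter, and the $+1$ is absorbed via $m\le 2^{\min\{t_j,t_\ell\}}$.
\end{enumerate}
These are routine to fill in.

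The genuine gap is part (e). You identify the obstruction correctly, but your remedy does not remove it, and no bound is actually derived. Take $x_4\in R_s$ with $s\le t/2$ and $x_1,x_2,x_3\in R_t$.
\begin{enumerate}
\item Choosing three points and then the fourth on their circumcircle only gives a weighted bound $2^{s+t+o(t)}$.
\item Counting all asymmetric quadruples in $B_t$ by Huxley--Konyagin gives $2^{2t}\cdot 2^{(76/29+o(1))t}\cdot 2^{-s-3t}=2^{(47/29+o(1))t-s}$.
\item Knowing that a circle meets $B_s$ in few lattice points (short arcs, Jarn\'ik, Cilleruelo--C\'ordoba) does nothing here. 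Only one point lies in the small box, and bounding lattice points per circle does not reduce the number of relevant configurations.
\end{enumerate}

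The missing idea is translation invariance. Write each quadruple as $S+v$, with $S=\{0,u_2,u_3,u_4\}$ a pinned shape and $v$ its point of smallest $\infty$-norm. The number of placements is then governed by the \emph{smallest} shell. In the example this gives $2^{2s}\cdot 2^{(76/29+\eps)t}\cdot 2^{-s-3t}=2^{s-(11/29-\eps)t}$.

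In general, for fixed $S$ the weighted sum over well-oriented translates is $\lesssim\alpha^4(1+\log d_3)/(d_3d_4)$. One must then combine two shape counts on dyadic ranges via $\min\{a,b\}\le a^{3/4}b^{1/4}$:
\begin{enumerate}
\item Huxley--Konyagin's $|\mathcal A_D|\lesssim_\eps D^{76/29+\eps}$, in terms of $d_4$;
\item the elementary $|\mathcal B_\Delta|\lesssim_\eps \Delta^{4+\eps}$, in terms of $d_3$.
\end{enumerate}
The second count is indispensable when $d_3\ll d_4$. For three points in $B_{t/2}$ and one in $R_t$, pinned Huxley--Konyagin alone gives $2^{(47/29-1/2)t+o(t)}$, while the elementary count gives $2^{t/2+o(t)}$. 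Together they yield $\sum_{j\le i\le T}(j+1)2^{(28/29+\eps)i}\lesssim_\eps 2^{(28/29+2\eps)T}$. Your plan contains neither the pinning step nor the $d_3$-interpolation, so as written it does not reach any power saving.
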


As mentioned in the introduction, we will prove parts (a) and (b), (c) and (d), and (e) of the above lemma in Sections~\ref{sec:lin}, \ref{sec:sym}, and \ref{sec:asym}, respectively. We now prove our main result assuming Lemma~\ref{lem:main}.

\begin{proof}[Proof of Theorem~\ref{thm:main}]
    We may assume that all the estimates in Lemma~\ref{lem:main} hold with the same universal constant $C \ge 1$. Fix an arbitrary $0 < \alpha < 1/(12C)^{1/3}$. For $T \in \Z_{\ge 0}$, let $\mathcal{E}_T$ be the event
    \begin{equation*}
        \{X_T \ge \alpha 2^{T-1}\} \cap \{Y_T^{\mathrm{lin}} \le 2C \alpha^4 2^T\} \cap \{Y_T^{\mathrm{sym}} \le 2C \alpha^4 2^T\} \cap \{Y_T^{\mathrm{asym}} \le 2C \alpha^4 2^T\}.
    \end{equation*}
    Applying Chebyshev's inequality to the random variable $X_T$ and using Lemma~\ref{lem:Xcount}, we obtain
    \begin{equation}
    \label{eqn:1}
        \mathbb{P} (X_T < \alpha 2^{T-1}) \le \mathbb{P} \left(X_T < \frac{1}{2} \E X_T\right) \le \frac{4}{\E X_T} \le \frac{4}{\alpha 2^T}.
    \end{equation}
    Similarly, applying Chebyshev's inequality to $Y_T^{\mathrm{lin}}$ and using Lemma~\ref{lem:main}~(a) and (b), we obtain
    \begin{equation}
    \label{eqn:2}
        \mathbb{P} (Y_T^{\mathrm{lin}} > 2C \alpha^4 2^T) \le \mathbb{P} (Y_T^{\mathrm{lin}} > \E Y_T^{\mathrm{lin}} + C \alpha^4 2^T) \le \frac{C (T+1)^4 2^T}{(C \alpha^4 2^T)^2} = \frac{(T+1)^4}{C \alpha^8 2^T}.
    \end{equation}
    Similarly, applying Chebyshev's inequality to $Y_T^{\mathrm{sym}}$ and using Lemma~\ref{lem:main}~(c) and (d), we obtain
    \begin{equation}
    \label{eqn:3}
        \mathbb{P} (Y_T^{\mathrm{sym}} > 2C \alpha^4 2^T) \le \frac{(T+1)^4}{C \alpha^8 2^T}.
    \end{equation}
    Finally, applying Markov's inequality to the random variable $Y_T^{\mathrm{asym}}$ and using Lemma~\ref{lem:main}~(e) yields
    \begin{equation}
    \label{eqn:4}
        \mathbb{P} (Y_T^{\mathrm{asym}} > 2C \alpha^4 2^T) \le \frac{C \alpha^4 2^{0.98 T}}{2C \alpha^4 2^T} = \frac{1}{2^{0.02 T + 1}}.
    \end{equation}
    Taking a union bound using \eqref{eqn:1}--\eqref{eqn:4}, we obtain
    \begin{equation*}
        \mathbb{P}(\mathcal{E}_T^c) \le \frac{4}{\alpha 2^T} + \frac{2(T+1)^4}{C \alpha^8 2^T} + \frac{1}{2^{0.02 T + 1}}.
    \end{equation*}
    The above inequality implies that $\sum_{T\geq 0}\mathbb{P}(\mathcal{E}_T^c)<\infty$. In particular, there exists a nonnegative integer $T_\alpha$ depending only on $\alpha$ so that $\sum_{T\geq T_\alpha}\mathbb{P}(\mathcal{E}_T^c)<1$ and therefore
    \begin{equation*}
        \mathbb{P}\left(\bigcap_{T = T_\alpha}^\infty  \mathcal{E}_T\right) > 0.
    \end{equation*}
    Thus we may fix a realisation of $Q$ such that the event $\mathcal{E}_T$ holds for each $T \ge T_\alpha$. Let $S$ be the set obtained from $Q$ by deleting the point with the largest infinity norm (with arbitrary tie breaking) from each bad quadruple in $Q$. Then, by construction, $S$ does not contain any concyclic or collinear quadruples of points. Let $n \ge 2^{T_\alpha}$ be any integer and set $T_n = \lfloor \log_2 n \rfloor$. Note that $S$ satisfies
    \begin{align*}
        |S \cap [n]^2| \ge |S \cap B_{T_n}| &\ge X_{T_n} - Y_{T_n}^{\mathrm{lin}} - Y_{T_n}^{\mathrm{sym}} - Y_{T_n}^{\mathrm{asym}}\\
        &\ge \alpha 2^{T_n-1} - 2C \alpha^4 2^{T_n} - 2C \alpha^4 2^{T_n} - 2C \alpha^4 2^{T_n}\\
        &= \alpha (1 - 12C \alpha^3) 2^{T_n-1}\\
        &\ge \frac{\alpha}{4} (1 - 12C \alpha^3) n.
    \end{align*}
    Finally, observing that $\frac{\alpha}{4} (1 - 12C \alpha^3) > 0$ since we chose $\alpha < 1/(12C)^{1/3}$ finishes the proof.
\end{proof}

\section{Collinear quadruples}
\label{sec:lin}

We begin with some definitions. A nonzero vector $v = (a,b) \in \Z^2$ is called \emph{primitive} if $\gcd(a,b) = 1$. A \emph{lattice line} is a line in $\mathbb{R}^2$ that contains at least two points from the lattice $\mathbb{Z}^2$.  Each lattice line is parallel to a primitive vector, unique up to sign: we fix one representative for each unoriented primitive direction, and call this vector the \emph{direction} of the line. For an integer $T \ge 0$, let $\mathcal{L}_T$ denote the set of collinear quadruples of points in $B_T$.

\begin{proof}[Proof of Lemma~\ref{lem:main}~(a)]
    By linearity of expectation and independence of $(\mathbf{1}\{x \in Q\})_{x \in B_T}$, we have
    \begin{equation}
        \label{eq:EYTlin}
        \E[Y_T^{\text{lin}}] = \sum_{\{x_1, x_2, x_3, x_4\} \in \mathcal{L}_T} p_{x_1}p_{x_2}p_{x_3}p_{x_4}.
    \end{equation}
    Fix $0 \le t_1 \le t_2 \le t_3 \le t_4 \le T$. We first bound the total contribution to the sum in \eqref{eq:EYTlin} from terms such that $x_i \in R_{t_i}$ for $i \in [4]$. There are $|R_{t_1}| \le 2^{2t_1}$ choices for $x_1$. Fix a point $x_1 \in R_{t_1}$. Let $m \in \N$ and let $L$ be a lattice line through $x_1$ in direction $v$ with $\|v\|_\infty = m$. We wish to bound
    \begin{equation}
    \label{eq: S(x,L) defn}
        S(x_1,L) \defeq \sum p_{x_2} p_{x_3} p_{x_4},
    \end{equation}
    where the sum is over triples $\{x_2, x_3, x_4\} \subseteq B_T \setminus \{x_1\}$ lying on the line $L$ with $x_i\in R_{t_i}$ for $2\leq i\leq 4$. Let $\{x_2, x_3, x_4\}$ be any such triple. Then, we have $m \le \|x_2 - x_1\|_\infty \le 2^{t_2}$. Therefore, the number of choices for $x_2$ is at most $|L\cap R_{t_2}| \le \lceil\frac{2^{t_2}}{m}\rceil \lesssim \frac{2^{t_2}}{m}$. Similarly, the number of choices for $x_i$ is at most $O(\frac{2^{t_i}}{m})$ for $i = 3, 4$. Since $p_{x_i} = O(\frac{\alpha}{2^{t_i}})$ for $i \in [4]$, we deduce that
    \begin{equation*}
        S(x_1,L) = O(\alpha^3 m^{-3}).
    \end{equation*}
    Now since there are $O(m)$ choices for the direction $v$ and hence the line $L$, with $\|v\|_{\infty} = m$, it follows that the total contribution of terms such that $x_i\in R_{t_i}$ for $i\in [4]$ to the sum in \eqref{eq:EYTlin} is at most
    \begin{equation*}
        O\left(2^{2t_1}\cdot \frac{\alpha}{2^{t_1}}\cdot \sum_{m=1}^\infty m\cdot \frac{\alpha^3}{m^3}\right) = O(\alpha^4 2^{t_1}).
    \end{equation*}
    Finally, since for a given $t_1$, there are at most $(T-t_1+1)^3$ choices for $(t_2,t_3,t_4)$, it follows that
    \begin{equation*}
        \E[Y_T^{\text{lin}}] \lesssim \sum_{t_1=0}^T\alpha^4 2^{t_1}(T-t_1+1)^3\lesssim \alpha^42^T,
    \end{equation*}
    where the last inequality follows from the convergence of the series $\sum_{t=0}^\infty 2^{-t}(t+1)^3$.
\end{proof}

For $x \in B_T$, define the quantity
\begin{equation*}
    \Gamma_T(x) \defeq \sum_{\{x, x_2, x_3, x_4\} \in \mathcal{L}_T} p_{x_2} p_{x_3} p_{x_4}.
\end{equation*}
In order to prove Lemma~\ref{lem:main} (b), we prove the following pointwise bound on $\Gamma_T(x)$ first.

\begin{lemma}
\label{lem:onefix}
    For each $x \in B_T$, we have
    \begin{equation*}
        \Gamma_T(x) = O(\alpha^3 (T+1)^4).
    \end{equation*}
\end{lemma}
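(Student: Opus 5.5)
We follow the argument for Lemma~\ref{lem:main}~(a), but with the point $x$ fixed and without the weight $p_x$ that was used there to absorb the sum over $x_1$. Fix $x \in B_T$. Every quadruple $\{x,x_2,x_3,x_4\} \in \mathcal{L}_T$ lies on a unique lattice line $L$ through $x$. Let $v$ be the direction of $L$ and set $m = \|v\|_\infty$. After relabelling we may assume $x_i \in R_{t_i}$ with $0 \le t_2 \le t_3 \le t_4 \le T$. There are at most $(T+1)^3$ choices of $(t_2,t_3,t_4)$, so it suffices to bound, for each fixed triple, the sum over lines $L$ through $x$ of $S(x,L)$. Here $S(x,L)$ is as in \eqref{eq: S(x,L) defn}, with $x$ in place of $x_1$ but with no constraint on the shell containing $x$.

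For a fixed line $L$ with direction of norm $m$, the points of $L$ are spaced $m$ apart in $\|\cdot\|_\infty$. Hence $|L \cap R_{t_i}| \le |L\cap B_{t_i}| \le \lceil 2^{t_i}/m\rceil$. Since $p_{x_i} \lesssim \alpha 2^{-t_i}$, this gives
\[
S(x,L) \lesssim \alpha^3 \prod_{i=2}^4 \min\Bigl(\frac{1}{2^{t_i}}\Bigl\lceil \frac{2^{t_i}}{m}\Bigr\rceil,\,1\Bigr) \lesssim \alpha^3 \prod_{i=2}^4 \Bigl(\frac1m + 2^{-t_i}\Bigr).
\]
A nonempty $L \cap B_{t_2}$ together with $x$ forces $m \le \|x_2-x\|_\infty \le 2^T$. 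There are $O(m)$ directions with $\|v\|_\infty = m$, and each determines exactly one line through $x$.

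The main obstacle is that the naive estimate $S(x,L)\lesssim \alpha^3 m^{-3}$ fails when $m > 2^{t_i}$. In that range a single point of $R_{t_i}$ can still lie on $L$, and it contributes weight $2^{-t_i}$, not $m^{-1}$. The fix is to use the fact that the line is pinned at $x$. If $m > 2^{t_2}$, then $L$ contains at most one point of $B_{t_2}$, namely $x_2$, so $L$ is determined by $x_2$. We therefore sum over $x_2$ instead of over directions. The $2^{2t_2}$ choices of $x_2$ with weight $\alpha 2^{-t_2}$ give $O(\alpha 2^{t_2})$ in total. The line through $x$ and $x_2$ has $m = \|x_2-x\|_\infty/g$, where $g$ is the gcd of the coordinates of $x_2-x$. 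The remaining factor is $\prod_{i=3,4}(1/m+2^{-t_i})$, which is at most $(1/m+2^{-t_2})^2$.

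So for $m\le 2^{t_2}$ we get the contribution $\sum_{m \le 2^{t_2}} m\cdot \alpha^3 m^{-3} \lesssim \alpha^3$. For $m>2^{t_2}$ we parametrise lines by $x_2$ as above. If $g$ is small, then $m \gtrsim \|x_2 - x\|_\infty$, and the factor $(1/m + 2^{-t_2})^2 \approx 2^{-2t_2}$ cancels the $2^{t_2}$. More generally we group the $x_2$ by the dyadic size of $m$. Lines with $m\in[M,2M)$ number $O(M^2)$. Each contains $O(2^{t_2}/M+1)=O(1)$ points of $R_{t_2}$ when $M>2^{t_2}$, but such a line must meet $B_{t_2}$ at all. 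A counting argument shows that there are only $O(2^{2t_2}+2^{t_2}M)$ such lines through $x$, each carrying weight $O(\alpha^3 2^{-t_2}\cdot 2^{-2t_2})$. This gives $O(\alpha^3(1+M/2^{t_2})2^{-t_2})$ per dyadic block, and the blocks with $M$ up to $2^T$ need the extra factor from $x_3,x_4$. In the worst case the dyadic blocks each contribute $O(\alpha^3)$, and there are at most $T+1$ of them.

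Summing over the $O(T+1)$ dyadic ranges of $m$ and the $(T+1)^3$ triples $(t_2,t_3,t_4)$ gives $\Gamma_T(x) = O(\alpha^3 (T+1)^4)$. I expect the large-$m$ lattice-line counting to be the delicate step, and there a loss of one factor of $T+1$ is affordable.
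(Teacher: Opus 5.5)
There is a gap in your large-$m$ case as written. You bound the number of lines through $x$ with $m\in[M,2M)$ meeting $B_{t_2}$ by $O(2^{2t_2}+2^{t_2}M)$, each of weight $O(\alpha^3 2^{-3t_2})$. That gives $O\bigl(\alpha^3(2^{-t_2}+M2^{-2t_2})\bigr)$ per dyadic block. The count is true but too weak. Summed over dyadic $M$ up to $2^T$, it yields $O(\alpha^3 2^{T-2t_2})$, which for small $t_2$ is of order $\alpha^3 2^T$, not $O(\alpha^3(T+1))$.

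The appeal to ``the extra factor from $x_3,x_4$'' does not rescue this. That factor is exactly the $2^{-2t_2}$ you have already used, and when $t_3=t_4=t_2$ there is nothing further to extract. So the closing claim that each block contributes $O(\alpha^3)$ does not follow from what you wrote. The discussion of $g$ is also a red herring. For every line with $m>2^{t_2}$ you have $1/m<2^{-t_2}$, so $(1/m+2^{-t_3})(1/m+2^{-t_4})\le 4\cdot 2^{-2t_2}$ whatever $g$ is.

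The repair is already in your preceding paragraph. Every line through $x$ containing a point $x_2\in R_{t_2}\setminus\{x\}$ is determined by $x_2$. So, in total over all $M$, there are at most $|R_{t_2}|\le 2^{2t_2}$ such lines, and you can sum over $x_2$ directly with no dyadic decomposition:
\[
\sum_{x_2\in R_{t_2}} p_{x_2}\cdot \alpha^2\Bigl(\frac{1}{m}+2^{-t_3}\Bigr)\Bigl(\frac{1}{m}+2^{-t_4}\Bigr)\lesssim 2^{2t_2}\cdot\alpha 2^{-t_2}\cdot\alpha^2 2^{-2t_2}=\alpha^3 2^{-t_2}.
\]
Here $m=m(x_2)>2^{t_2}$ is the direction norm of the line through $x$ and $x_2$. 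Combined with your small-$m$ bound, this gives $O(\alpha^3)$ per triple $(t_2,t_3,t_4)$, and hence even $\Gamma_T(x)=O(\alpha^3(T+1)^3)$.

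The paper avoids the case split altogether. Since $x_2,x_3$ are distinct points of $L$ in $B_{t_3}$, one has $m\le\|x_3-x_2\|_\infty\le 2^{t_3}$, so the count $O(2^{t_i}/m)$ is valid for $i=3,4$. Bounding the $x_2$-contribution trivially by $O(\alpha)$ gives $S(x,L)\lesssim\alpha^3m^{-2}$, and then $\sum_{m\le 2^{t_3}} m\cdot m^{-2}\lesssim T+1$. That argument is shorter but loses the factor $T+1$ that your corrected split saves.
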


\begin{proof}
    The proof is very similar to that of Lemma~\ref{lem:main}~(a), so we shall keep details to a minimum. Fix $0 \le t_2 \le t_3 \le t_4 \le T$. We define $S(x, L)$ as in \eqref{eq: S(x,L) defn}. The only difference is that now we have a slightly weaker bound on $m$, namely, $m \le \|x_3 - x_2\|_\infty \le 2^{t_3}$. As before, the number of choices for $x_i$ is $O(\frac{2^{t_i}}{m})$ for $i \in \{3,4\}$. However, we now use the weaker bound of $O(2^{t_2})$ on the number of choices for $x_2$. It follows that
    \begin{equation*}
        S(x, L) \lesssim 2^{t_2}\cdot\frac{2^{t_3}}{m}\cdot\frac{2^{t_4}}{m}\cdot\alpha 2^{-t_2}\cdot\alpha2^{-t_3}\cdot\alpha 2^{-t_4}= \alpha^3 m^{-2}.
    \end{equation*}
    Hence, the contribution to the sum defining $\Gamma_T(x)$ of terms with $x_i \in R_{t_i}$ for $i \in \{2,3,4\}$ is at most 
    \begin{equation*}
        O\left(\sum_{m=1}^{2^{t_3}} m\cdot \frac{\alpha^3}{m^2}\right) \lesssim \alpha^3 (T+1).
    \end{equation*}
    Noting that there are $O((T+1)^3)$ choices for the triple $(t_2,t_3,t_4)$, we obtain the claim.
\end{proof}

\begin{proof}[Proof of  Lemma~\ref{lem:main} (b)]
    Note that $ Y_T^{\mathrm{lin}} = \sum_{\tau \in \mathcal{L}_T} I_\tau$, where $I_\tau$ is the indicator that all four points in the quadruple $\tau$ are selected. It follows that
    \begin{align}
    \label{eq: second moment}
        \Var(Y_T^{\mathrm{lin}}) = \sum_{\tau, \tau'} \operatorname{Cov}(I_\tau,I_{\tau'})=\sum_{\tau \cap \tau'\neq \emptyset} \operatorname{Cov}(I_\tau,I_{\tau'})\leq \sum_{\tau \cap \tau'\neq \emptyset} \E(I_\tau I_{\tau'})=\sum_{i=1}^4V_i,
    \end{align}
    where, for $i\in [4]$, $V_i$ is defined as
    \begin{equation*}
        V_i \defeq \sum_{|\tau \cap \tau'| = i} \E(I_\tau I_{\tau'}).
    \end{equation*}
    
    First, we bound the total contribution from the terms satisfying $|\tau\cap \tau'|=i$ for $i\in \{2,3,4\}$.
    Such pairs lie on a common lattice line and together contain exactly $8-i$ distinct points. For a given set $\sigma$ of $8-i$ collinear points, there are exactly $\binom{8-i}{i,\, 4-i,\, 4-i}$ pairs $(\tau, \tau')$ of collinear quadruples such that $\tau \cup \tau' = \sigma$. Thus $V_i = O(\E[Y^\mathrm{lin}_{8-i,T}])$, where $Y^\mathrm{lin}_{k,T}$ denotes the number of unordered collinear $k$-tuples in $Q \cap B_T$. By running through the proof of Lemma~\ref{lem:main}~(a) for $k$-tuples instead of $4$-tuples, we can see that $\E[Y^\mathrm{lin}_{k,T}] = O(\alpha^k 2^T)$ for any $k\geq 4$. We conclude that $\E[Y^\mathrm{lin}_{8-i,T}] = O(\alpha^{8-i} 2^T) = O(2^T)$ for $i \in \{2,3,4\}$. Therefore, we obtain
    \begin{equation}
    \label{eqn:V234}
        V_2 + V_3 + V_4 = O(2^T).
    \end{equation}
    Next, we bound the total contribution from the terms satisfying $|\tau \cap \tau'| = 1$. We have
    \begin{equation*}
        V_1 \le \sum_{x \in B_T} p_{x} \Gamma_T(x)^2 \lesssim (T+1)^4 \cdot \sum_{x \in B_T} p_{x} \Gamma_T(x) \asymp (T+1)^4 \E[Y_T^{\mathrm{lin}}] \lesssim (T+1)^4 2^T,
    \end{equation*}
    where the second and the last inequalities follow from Lemma~\ref{lem:onefix} and Lemma~\ref{lem:main}~(a), respectively. Adding this to \eqref{eqn:V234} and using the resulting estimate in \eqref{eq: second moment} yields the desired result.
\end{proof}

\section{Symmetric cyclic quadruples}
\label{sec:sym}

In this section, we prove parts (c) and (d) of Lemma~\ref{lem:main}. We begin by introducing some notation. For a primitive vector $v = (a,b) \in \mathbb Z^2$, let $v^\perp \defeq (-b,a)$. Note that $v^\perp$ is orthogonal to $v$ and it is primitive if and only if $v$ is. Suppose that $\{x, y, z, w\} \subset \mathbb{Z}^2$ is a symmetric cyclic quadrilateral, that is, an isosceles trapezium. For each isosceles trapezium, choose a reflection axis that interchanges its vertices in two pairs, using a fixed deterministic rule when more than one such axis exists. Suppose that reflection through the axis of symmetry interchanges $x$ with $y$ and $z$ with $w$. We define the \emph{axis direction} of the isosceles trapezium to be $v$, where $v^\perp$ is the direction of the line joining $x$ and $y$. Here, the direction of a lattice line is as defined at the start of Section \ref{sec:lin}. By construction, the chosen axis of symmetry has direction $v$. For an integer $T \ge 0$, let $\mathcal{S}_T$ denote the set of symmetric cyclic quadruples of points in $B_T$.

\begin{proof}[Proof of Lemma~\ref{lem:main}~(c)]
    By linearity of expectation and independence of $(\mathbf{1}\{x \in Q\})_{x \in B_T}$, we have
    \begin{equation}
    \label{eqn:EYTsym}
        \E[Y_T^{\mathrm{sym}}]
        = \sum_{\{x_1,x_2,x_3,x_4\}\in\mathcal{S}_T} p_{x_1} p_{x_2} p_{x_3} p_{x_4}.
    \end{equation}
    Fix $0 \le t_1 \le t_2 \le t_3 \le t_4 \le T$. We first bound the total contribution to the sum in \eqref{eqn:EYTsym} from terms such that $x_i \in R_{t_i}$ for $i \in [4]$. There are $|R_{t_1}| \le 2^{2t_1}$ choices for $x_1$. Fix a point $x_1 \in R_{t_1}$. Let $m \in \N$ and let $v$ be a primitive vector in $\Z^2$ with $\|v\|_\infty = m$. We wish to bound
    \begin{equation}
    \label{eq: S(x,v) defn}
        \tilde{S}(x_1, v) \defeq \sum p_{x_2} p_{x_3} p_{x_4},
    \end{equation}
    where the sum is over triples $\{x_2, x_3, x_4\} \subseteq B_T \setminus \{x_1\}$ such that $\{x_1, x_2, x_3, x_4\}$ forms an isosceles trapezium with axis direction $v$, and $x_i \in R_{t_i}$ for $2 \le i \le 4$. For any such triple $\{x_2, x_3, x_4\}$, let $x_j$ be the reflected mate of $x_1$ and let $\{x_k, x_\ell\}$ be the other reflected pair of vertices with
    \begin{equation*}
        k = k(j) \defeq \min (\{2,3,4\} \setminus \{j\}), \quad \text{ and } \quad \ell = \ell(j) \defeq \max (\{2,3,4\} \setminus \{j\}).
    \end{equation*}
    Then $x_j\in x_1+\Z v^\perp$ and $x_k \in x_\ell + \Z v^\perp$. It follows that $m \le \min \{\|x_1 - x_j\|_\infty,\|x_k-x_\ell\|_\infty \} \le 2^{\min\{t_j,t_\ell\}}$ and that there are at most $O(\frac{2^{t_j}}{m})$ choices for $x_j$. Further, note that
    \begin{equation*}
        x_k = \frac{x_1 + x_j}{2} + \frac{x_k + x_\ell - x_1 - x_j}{2} + \frac{x_k - x_\ell}{2} \in \left(\frac{x_1 + x_j}{2} + \Z \frac{v}{2} + \Z \frac{v^\perp}{2}\right)
    \end{equation*}
    since $\frac{x_k + x_\ell - x_1 - x_j}{2} \in \R v \cap (\frac{1}{2}\Z)^2 = \mathbb{Z}\frac{v}{2}$. Hence there are $O((2^{t_k})^2/m^2 + 1)$ choices for $x_k \in B_{t_k}$.
    Note that fixing $x_k$ determines $x_\ell$ uniquely. Since $p_{x_i} = O(\frac{\alpha}{2^{t_i}})$ for $2\leq i\le 4$, we obtain
    \begin{equation*}
        \tilde S(x_1, v) = O\left(\frac{\alpha^3}{m2^{t_2} 2^{t_3} 2^{t_4}} \sum_{j = 2}^4 2^{t_j} \left(1 + \frac{2^{2t_{k(j)}}}{m^2}\right) \mathbf{1}_{m \le 2^{t_{\ell(j)}}}\right).
    \end{equation*}
    Now since there are $O(m)$ choices for the primitive vector $v$ with $\|v\|_{\infty} = m$, it follows that the total contribution of terms such that $x_i\in R_{t_i}$ for $i\in [4]$ to the sum in \eqref{eqn:EYTsym} is at most 
    \begin{equation*}
        |R_{t_1}|\cdot\sum_{m=1}^\infty m \cdot\max_{x_1\in R_{t_1},\|v\|_\infty=m} p_{x_1}\tilde S(x_1,v)\lesssim\alpha^4 2^{t_1} \left( \sum_{j=2}^4 \frac{2^{t_j} 2^{t_{\ell(j)}}}{2^{t_2} 2^{t_3} 2^{t_4}} + \sum_{j=2}^4 \frac{2^{t_j} 2^{2t_{k(j)}}}{2^{t_2} 2^{t_3} 2^{t_4}} \sum_{m=1}^\infty \frac{1}{m^2}  \right) \lesssim\alpha^4 2^{t_1},
    \end{equation*}
    where the last inequality holds since $\max \{t_j + t_{\ell(j)}, t_{j} + 2t_{k(j)}\} \le t_j + t_{k(j)} + t_{\ell(j)} = t_2 + t_3 + t_4$. Finally, since for any given $t_1$, there are at most $(T - t_1 + 1)^3$ choices for $(t_2, t_3, t_4)$, it follows that
    \begin{equation*}
        \E[Y_T^{\mathrm{sym}}] \lesssim \sum_{t_1 = 0}^T \alpha^4 2^{t_1}(T-t_1+1)^3 \lesssim \alpha^4 2^T,
    \end{equation*}
    where the last inequality follows from the convergence of the series $\sum_{t\geq0}2^{-t}(t+1)^3$.
\end{proof}

Analogous to the definition of $\Gamma_T(x)$ in Section~\ref{sec:lin}, for $x \in B_T$, we define
\begin{equation*}
    \tilde{\Gamma}_T(x) \defeq \sum_{\{x, x_2, x_3, x_4\} \in \mathcal{S}_T} p_{x_2} p_{x_3} p_{x_4}.
\end{equation*}
Further, for distinct $x, y \in B_T$, we define
\begin{equation}
\label{eqn:Dxy}
    D_T(x, y) \defeq \sum_{\{x, y, x_3, x_4\} \in \mathcal{S}_T} p_{x_3} p_{x_4}.
\end{equation}
To prove Lemma~\ref{lem:main}~(d), we first establish the following pointwise bounds on $\tilde{\Gamma}_T(x)$ and $D_T(x,y)$.

\begin{lemma}
\label{lem:symonefix}
    For every $x \in B_T$, we have
    \begin{equation*}
        \tilde{\Gamma}_T(x) = O(\alpha^3 (T+1)^4).
    \end{equation*}
\end{lemma}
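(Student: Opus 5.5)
We mirror the proof of Lemma~\ref{lem:onefix}, replacing the line-counting argument with the trapezium parametrisation used in the proof of Lemma~\ref{lem:main}~(c). Fix $x \in B_T$ and $0 \le t_2 \le t_3 \le t_4 \le T$, and restrict to quadruples $\{x,x_2,x_3,x_4\}$ with $x_i \in R_{t_i}$. There are $O((T+1)^3)$ such triples $(t_2,t_3,t_4)$. It therefore suffices to show that the contribution of each triple to $\tilde\Gamma_T(x)$ is $O(\alpha^3(T+1))$. As in the proof of Lemma~\ref{lem:main}~(c), we further decompose according to the axis direction $v$, with $\|v\|_\infty = m$; there are $O(m)$ such $v$. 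We also decompose according to the index $j \in \{2,3,4\}$ of the reflected mate $x_j$ of $x$, and we write $k=k(j)$ and $\ell=\ell(j)$.

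For fixed $v$ and $j$, the point $x_j$ lies on the line $x+\Z v^\perp$, so it has $O(2^{t_j}/m)$ choices, and necessarily $m \le 2^{t_j+1}$. The pair $\{x_k,x_\ell\}$ is determined by $x_k$. By the half-lattice argument of part~(c), $x_k$ lies in a translate of $\Z\frac v2+\Z\frac{v^\perp}2$ intersected with $B_{t_k}$. This gives $O(1+2^{2t_k}/m^2)$ choices, and moreover $m\le \|x_k-x_\ell\|_\infty \le 2^{t_\ell}$. Multiplying by $p_{x_2}p_{x_3}p_{x_4}\lesssim \alpha^3 2^{-t_2-t_3-t_4}$ and summing over $v$ with $\|v\|_\infty = m$, we bound the contribution by
\begin{equation*}
\alpha^3 \sum_{m \le 2^{\min(t_j,t_\ell)+1}} m\cdot\frac{2^{t_j}}{m}\Bigl(1+\frac{2^{2t_k}}{m^2}\Bigr)2^{-t_2-t_3-t_4}.
\end{equation*}
The term with $2^{2t_k}/m^2$ contributes at most $\alpha^3 2^{t_j+2t_k-t_2-t_3-t_4}\sum_m m^{-2}\lesssim\alpha^3$, because $t_k\le t_\ell$. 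The term with $1$ contributes at most $\alpha^3 2^{t_j+\min(t_j,t_\ell)-t_2-t_3-t_4}$. This is $\lesssim \alpha^3$ since $\min(t_j,t_\ell)\le t_\ell$ and $t_j+t_\ell\le t_2+t_3+t_4$.

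The main point needing care is that $x$ is no longer the smallest-norm vertex, so we cannot use the bound $m\le 2^{t_1}$ from part~(c). I would handle this as above, using only the constraints $m\le 2^{t_j+1}$ and $m\le 2^{t_\ell}$ that come from the two reflected pairs. If some configuration gives a harmonic sum $\sum_{m\le 2^{t}} 1/m$ rather than a bounded one (for instance, if one chooses the weaker count $O(2^{t_j})$ for $x_j$ as in Lemma~\ref{lem:onefix}), this only costs a factor $O(T+1)$. That factor is exactly the slack allowed by the target bound $O(\alpha^3 (T+1)^4)$.

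Summing the per-triple bound of $O(\alpha^3 (T+1))$ over the $O((T+1)^3)$ choices of $(t_2,t_3,t_4)$ and the three choices of $j$ gives $\tilde\Gamma_T(x)=O(\alpha^3(T+1)^4)$.
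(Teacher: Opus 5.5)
There is a genuine gap at the step where you say that $x_j$ has $O(2^{t_j}/m)$ choices and that ``necessarily $m\le 2^{t_j+1}$''. All you know is $\|x-x_j\|_\infty\ge m$. Since $x$ is an arbitrary point of $B_T$ (the very issue you flagged), this bounds $m$ only by $2^T$, not by $2^{t_j}$.

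For a concrete counterexample, take the square with vertices $(2,2)$, $(a+2,3)$, $(1,a+2)$, $(a+1,a+3)$. Both of its axes that swap the vertices in two pairs have direction $(a,1)$ or $(1,-a)$, so $m=a$. Whichever axis the rule picks, some vertex $x$ (namely $(1,a+2)$ or $(a+2,3)$) has mate $(2,2)\in R_1$. So $j=2$ and $t_2=1$, while $m=a$ is arbitrarily large, and $x_j$ has one choice rather than $O(2/a)$.

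The correct count is $O(1+2^{t_j}/m)$, and the only general constraint on $m$ is $m\le 2^{t_\ell}$. If you redo your sum with these, the product of the two ``$+1$'' terms contributes
\[
\alpha^3\, 2^{-t_2-t_3-t_4}\sum_{m\le 2^{t_\ell}} m \asymp \alpha^3\, 2^{2t_\ell-t_2-t_3-t_4}.
\]
For $j\in\{2,3\}$ (so $\ell=4$) this is $\alpha^3 2^{t_4-t_2-t_3}$. That is not $O(\alpha^3(T+1))$, and the logarithmic slack you invoke cannot absorb it.

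This is exactly the regime the paper separates out as type~3 ($m>2^{t_k}$ and $j\in\{2,3\}$). The missing idea is to stop summing over the $O(m)$ axis directions there. Since $x$ is fixed, $x_j$ determines $v$, so one instead sums over the at most $2^{2t_j}$ points $x_j\in R_{t_j}$. There are $O(1)$ choices for $x_k$ because $m>2^{t_k}$, and this gives $\alpha^3 2^{2t_j-t_2-t_3-t_4}\lesssim\alpha^3$.

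Alternatively, your direction sum can be salvaged by proving $m\lesssim 2^{t_3}$. The reflection maps $x_k-x_j$ to $x_\ell-x$, and $v$ is primitive, so $|v|^2$ divides $2\langle x_k-x_j,v\rangle$. This is nonzero since the points are not collinear, so $m\le|v|\le 2|x_k-x_j|$. No such argument appears in your proposal, however.

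The rest of your computation goes through once the $+1$ is restored. In particular, the term $\sum_{m\le 2^{t_\ell}}2^{2t_k}/m$ is where the factor $T+1$ genuinely arises, as in the paper's type~1 bound.
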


\begin{proof}
    Let $\tau = \{x, x_2, x_3, x_4\}$ be an isosceles trapezium in $\mathcal{S}_T$. Suppose that it satisfies $x_i \in R_{t_i}$ for each $i \in \{2, 3, 4\}$, with $t_2 \le t_3 \le t_4$. Let $v$ be the axis direction of $\tau$ and let $m \defeq \|v\|_\infty$. Further, let $x_j$ be the reflected mate of $x$ and let $\{x_k, x_\ell\}$ be the other reflected pair of vertices with
    \begin{equation*}
        k = k(j) \defeq \min (\{2,3,4\} \setminus \{j\}), \quad \text{ and } \quad \ell = \ell(j) \defeq \max (\{2,3,4\} \setminus \{j\}).
    \end{equation*}
    We say that $\tau$ is of type $1$ if $m \le 2^{t_k}$, of type $2$ if $m > 2^{t_k}$ and $j = 4$, and of type $3$ otherwise, namely, if $m > 2^{t_k}$ and $j \in \{2,3\}$. Fix $0 \le t_2 \le t_3 \le t_4 \le T$. For $z \in [3]$, define
    \begin{equation*}
        \tilde{\Gamma}_z(x) \defeq \sum p_{x_2} p_{x_3} p_{x_4},
    \end{equation*}
    where $\{x, x_2, x_3, x_4\} \in \mathcal{S}_T$ is of type $z$ and $x_i \in R_{t_i}$ for each $i \in \{2, 3, 4\}$. It suffices to show that $\tilde{\Gamma}_1(x) + \tilde{\Gamma}_2(x) + \tilde{\Gamma}_3(x) = O(\alpha^3 (T+1))$, since the result then follows by noting that there are at most $(T+1)^3$ choices for $(t_2, t_3, t_4)$.
    
    As in the proof of Lemma~\ref{lem:main}~(c), note that $1 \le m \le \|x_k - x_\ell\|_\infty \le 2^{t_\ell}$, and there are $O(m)$ choices for $v$ satisfying $\|v\|_\infty = m$. Further, note that for fixed $v$, the number of choices for $x_j$ is $O(2^{t_j}/m+1)$, and once $x_j$ is also fixed, the number of choices for $x_k$ is $O(2^{2t_k}/m^2+1)$. 

    For trapezia of type $1$, we have $2^{2t_k} \ge m^2$. So the number of choices for $x_k$ for fixed $v$ and $x_j$ is $O(2^{2t_k}/m^2)$. Therefore, we have
    \begin{equation*}
        \tilde{\Gamma}_1(x) \lesssim \sum_{j = 2}^4 \sum_{m=1}^{2^{t_{\ell(j)}}} m \cdot \frac{\alpha^3}{2^{t_2} 2^{t_3} 2^{t_4}} \left(1+\frac{2^{t_j}}{m}\right) \frac{2^{2t_{k(j)}}}{m^2} \lesssim \sum_{j = 2}^4 \alpha^3 \left((t_{\ell(j)}+1) \cdot \frac{2^{2t_{k(j)}}}{2^{t_2+t_3+t_4}} + \frac{2^{t_j+2t_{k(j)}}}{2^{t_2+t_3+t_4}}\right).
    \end{equation*}
    Noting that $t_{k(j)} \le t_{\ell(j)} \le T$ in the last expression above yields
    \begin{equation}
    \label{eqn:G1}
        \tilde{\Gamma}_1(x) \lesssim \alpha^3 (T+1).
    \end{equation}
    For trapezia of type $2$ and $3$, we have $2^{2t_k} < m^2$. So the number of choices for $x_k$ for fixed $v$ and $x_j$ is $O(1)$. Now, for trapezia of type $2$, we have $j = 4$ and $\ell(j) = 3$, and so
    \begin{equation}
    \label{eqn:G2}
        \tilde{\Gamma}_2(x) \lesssim \sum_{m=1}^{2^{t_3}} m \cdot \frac{\alpha^3}{2^{t_2} 2^{t_3} 2^{t_4}} \left(1+\frac{2^{t_4}}{m}\right) \lesssim \alpha^3 \left(\frac{2^{2t_3}}{2^{t_2+t_3+t_4}} + \frac{2^{t_3+t_4}}{2^{t_2+t_3+t_4}}\right) \lesssim \alpha^3.
    \end{equation}
    Finally, note that the axis direction $v$ is determined once $x_j$ is fixed. Note that there are at most $|R_{t_j}| \le 2^{2t_j}$ choices for $x_j$. For trapezia of type $3$, we have $j \le 3$ and $\ell(j) = 4$, and so
    \begin{equation}
    \label{eqn:G3}
        \tilde{\Gamma}_3(x) \lesssim \sum_{j = 2}^3 2^{2t_j} \cdot \frac{\alpha^3}{2^{t_2} 2^{t_3} 2^{t_4}} \lesssim \alpha^3.
    \end{equation}
    Adding the estimates \eqref{eqn:G1}, \eqref{eqn:G2}, and \eqref{eqn:G3} yields the claim.
\end{proof}

\begin{lemma}
\label{lem:symtwofix}
    For distinct $x, y \in B_T$, we have
    \begin{equation*}
        D_T(x, y) = O(\alpha^2 (T+1)).
    \end{equation*}
\end{lemma}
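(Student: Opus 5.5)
We need to bound $D_T(x,y)=\sum p_{x_3}p_{x_4}$ over isosceles trapezia $\{x,y,x_3,x_4\}\in\mathcal S_T$. We split according to how the reflection pairs the vertices: either $x,y$ are reflected mates (case A), or $x$ is mated with one of $x_3,x_4$ and $y$ with the other (case B). We dyadically localise $x_3\in R_{t_3}$, $x_4\in R_{t_4}$ with, say, $t_3\le t_4$ after relabelling, and aim for a bound of $O(\alpha^2)$ for each pair $(t_3,t_4)$ summed over one of the two scales. Actually $(T+1)^2$ scale pairs would be too many, so we must exploit geometric decay in one scale and only lose a single factor $T+1$.

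\emph{Case A.} Here the axis is the perpendicular bisector of $xy$, so $v$ is determined. The points $x_3,x_4$ are mates: $x_4$ is the reflection of $x_3$, so choosing $x_3$ determines $x_4$. Also $x_3$ lies on a lattice line through $x_3$ parallel to $xy$ whose midpoint lies on the fixed axis, so $x_3-x_4\in\Z v^\perp$ and the midpoint lies on the fixed axis line $\ell$. Parametrise by the midpoint $u\in\ell\cap(\frac12\Z)^2$ and the half-difference $s\in\Z\frac{v^\perp}{2}$. With $m=\|v\|_\infty$, the number of choices with $x_3,x_4\in B_{t_4}$ is $O(2^{2t_4}/m^2+2^{t_4}/m+1)$. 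This is not quite enough, since we need weight $2^{-t_3-t_4}$. So I will instead count: $x_4\in R_{t_4}$ has $O(2^{t_4}/m)$ positions along its line, there are $O(2^{t_4}/m+1)$ admissible lines, and $x_3$ is then determined and must lie in $R_{t_3}$. Since $x_3$ is the reflection of $x_4$, the reflection maps $B_{t_3}$ into a region of area $O(2^{2t_3})$, so the count of $x_3\in R_{t_3}$ is $O(2^{2t_3}/m^2+2^{t_3}/m+1)$ by the same lattice argument (rows parallel to $v^\perp$ at spacing $\sim 1/m$, hmm, spacing $\ge 1/(2m)$). Weighting by $\alpha^2 2^{-t_3-t_4}$ gives $\alpha^2(2^{t_3-t_4}+\cdots+2^{-t_3-t_4})$. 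The term $1\cdot 2^{-t_3-t_4}$ sums trivially. The main term $2^{2t_3}/m^2\cdot 2^{-t_3-t_4}\le 2^{t_3-t_4}$ sums over $t_3\le t_4$ to $O(T+1)$. Hence case A contributes $O(\alpha^2(T+1))$.

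\emph{Case B.} Now $x$ is mated with (say) $x_3$ and $y$ with $x_4$, so $x_3-x$ and $x_4-y$ are both parallel to $v^\perp$ and the two midpoints lie on a common line in direction $v$. Given $v$ (i.e.\ the direction of $x_3-x$), $x_3$ ranges over $x+\Z v^\perp$ and then $x_4$ is determined as the reflection of $y$ in the perpendicular bisector of $xx_3$. Thus $D$ in this case is at most $\sum_{x_3}p_{x_3}p_{x_4(x_3)}$, summed over all $x_3\ne x$. Equivalently we sum over pairs $(x_3,x_4)$ with $x_4$ a function of $x_3$. Bounding $p_{x_4}\le\alpha 2^{-t_4}$ and counting $x_3\in R_{t_3}$ gives $2^{2t_3}\cdot\alpha^2 2^{-t_3-t_4}$, which is fine when $t_3\le t_4$ (summing to $O(T+1)$ over $t_3\le t_4$). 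For $t_4<t_3$ we reverse roles: $x_3$ is likewise determined by $x_4$ (as the reflection of $x$ in the perpendicular bisector of $yx_4$), so count $x_4\in R_{t_4}$ instead, giving $2^{t_4-t_3}$. Summing $2^{-|t_3-t_4|}$ over pairs yields $O(T+1)$, hence case B is $O(\alpha^2(T+1))$. Both pairings in case B are symmetric.

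The main obstacle is case A, where $x_3$ does not determine $\{x,y\}$-independent data beyond one free parameter family. There the lattice-point count in $R_{t_3}$ of the reflected image must be done carefully. This is especially delicate when $m$ is large relative to $2^{t_3}$, where the "$+1$" and $2^{t_3}/m$ terms must still be summed over the $O(2^{t_4}/m+1)$ lines without picking up an extra factor. I would first try to handle this by noting that in case A only $v$ fixed (a single direction), so no sum over $m$ occurs and all these terms are at most $2^{t_4}\cdot 2^{t_3}/2^{t_3+t_4}$-type quantities. Actually a cleaner fallback is to use $x_3$ determining $x_4$ and $x_3$ ranging over a half-lattice intersected with $B_{t_3}$ of size $O(2^{2t_3})$, which gives the same $2^{t_3-t_4}$ bound.
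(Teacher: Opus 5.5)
Your proof is correct and is essentially the paper's argument: in each pairing, $x$, $y$ and one of $x_3,x_4$ determine the other, so counting the point in the smaller shell trivially (at most $2^{2t}$ choices) against the weight $\alpha^2 2^{-t_3-t_4}$ gives $O(\alpha^2(T+1))$. The paper simply merges your Cases A and B into ``given $x,y,x_3$ there are at most three choices of $x_4$'' and sums only over the smaller scale $t_3$. The refined lattice count in your Case A, and the worry about summing over lines there, are unnecessary. Your own fallback, the trivial $O(2^{2t_3})$ count of $x_3$, is exactly what is needed.
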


\begin{proof}
    Fix $0 \le t_3 \le T$. We wish to bound the contribution to the sum in \eqref{eqn:Dxy} from terms such that $x_i \in R_{t_i}$ for each $i \in \{3, 4\}$ with $t_3 \le t_4 \le T$. There are $|R_{t_3}| \le 2^{2t_3}$ choices for $x_3$. Moreover, once $x_3$ is fixed, there are at most $3$ choices for $x_4$ so that $x$, $y$, $x_3$ and $x_4$ form an isosceles trapezium. Finally, since $p_{x_3} p_{x_4} = O(\alpha^2 2^{-t_3-t_4}) = O(\alpha^2 2^{-2t_3})$, it follows that
    \begin{equation*}
        D_T(x, y) \lesssim \sum_{t_3 = 0}^T 2^{2t_3} \cdot \alpha^2 2^{-2t_3} \lesssim \alpha^2 (T+1),
    \end{equation*}
    thereby finishing the proof.
\end{proof}

\begin{proof}[Proof of Lemma~\ref{lem:main} (d)]
    For $\tau \in \mathcal{S}_T$, let $I_\tau$ denote the indicator of the event that all points in the quadruple $\tau$ are selected. Similarly as in the proof of Lemma~\ref{lem:main}~(b), we have
    \begin{equation}
    \label{eq:Var sym expand}
        \Var(Y_T^{\mathrm{sym}})
        \le \sum _{i=1}^4 V_i,
    \end{equation}
    where, for $i \in [4]$, $V_i$ is defined as
    \begin{equation*}
            V_i \defeq \sum_{|\tau \cap \tau'| = i} \E(I_\tau I_{\tau'}).
    \end{equation*}
    
    First, we bound $V_1$ as
    \begin{equation}
    \label{eqn:V1}
        V_1 \le \sum_{x \in B_T} p_{x} \tilde{\Gamma}_T(x)^2 \lesssim (T+1)^4 \cdot \sum_{x \in B_T} p_{x} \tilde{\Gamma}_T(x) \asymp (T+1)^4 \E[Y_T^{\text{sym}}],
    \end{equation}
    where the second inequality follows from Lemma~\ref{lem:symonefix}. Next, we bound $V_2$ as
    \begin{equation}
    \label{eqn:V2}
        V_2 \le \sum_{x\neq y} p_x p_y D_T(x,y)^2 \lesssim (T+1) \cdot \sum_{x\neq y} p_x p_y D_T(x,y) \asymp (T+1) \E[Y_T^{\text{sym}}],
    \end{equation}
    where the second inequality follows from Lemma~\ref{lem:symtwofix}. Last, note that for a given $\tau \in \mathcal{S}_T$, there are at most $3\binom{4}{3} = 12$ choices for $\tau' \in \mathcal{S}_T$ so that $|\tau \cap \tau'| \ge 3$ and so it follows that
    \begin{equation}
    \label{eqn:V34}
        V_3 + V_4 \lesssim \sum_{\tau \in \mathcal{S}_T} \E[I_\tau] = \E[Y_T^{\text{sym}}].
    \end{equation}
    
    Now using the estimates \eqref{eqn:V1}, \eqref{eqn:V2}, and \eqref{eqn:V34} in \eqref{eq:Var sym expand} gives
    \begin{equation*}
        \mathrm{Var}(Y_T^{\mathrm{sym}}) \lesssim (T+1)^4 \E[Y_T^{\mathrm{sym}}].
    \end{equation*}
    Finally, using Lemma~\ref{lem:main}~(c) in the right-hand side above yields the desired result.
\end{proof}

\section{Asymmetric cyclic quadruples}
\label{sec:asym}

Let $T \ge 0$ be an integer. In this section, our objective is to bound the expected number of asymmetric cyclic quadruples in $B_T$. We begin with the following definition. A \emph{pinned} asymmetric cyclic lattice quadrilateral refers to an unordered quadruple $S = \{0, u_2, u_3, u_4\}$ of distinct points in $\mathbb{Z}^2$ such that these points are vertices of an asymmetric cyclic quadrilateral. Let $\mathcal{A}$ denote the set of such pinned quadrilaterals. For $i \in \{2,3,4\}$, we denote the infinity norm of $u_i$ by $d_i$. We henceforth adopt the convention that $u_2$, $u_3$, and $u_4$ are ordered such that $d_2 \le d_3 \le d_4$. For $D > 0$, let $\mathcal{A}_D$ denote the set of pinned quadrilaterals in $\mathcal{A}$ with $d_4 \le D$. Huxley and Konyagin~\cite{HK} showed that the number of translation classes of asymmetric cyclic lattice quadrilaterals with circumradius at most $R$ is $O_\varepsilon(R^{2+18/29+\varepsilon})$. For each translation class, there are exactly four pinned quadrilaterals in $\mathcal{A}$, so the same result also holds for these. However, as noted in \cite[Section~4]{GGK}, Huxley and Konyagin's proof also implies the following stronger result.

\begin{lemma}
\label{lem:HK}
    For every $\varepsilon > 0$, we have $|\mathcal{A}_D| = O_\varepsilon(D^{2+18/29+\varepsilon})$.
\end{lemma}

For $\Delta > 0$, let $\mathcal{B}_{\Delta}$ denote the set of pinned quadrilaterals in $\mathcal{A}$ with $d_3 \le \Delta$. In the following lemma, we bound the size of $\mathcal{B}_{\Delta}$.

\begin{lemma}
\label{lem:count}
    For every $\varepsilon > 0$, we have $|\mathcal{B}_{\Delta}| = O_\varepsilon(\Delta^{4+\varepsilon})$.
\end{lemma}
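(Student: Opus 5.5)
The plan is to count pinned quadrilaterals $\{0,u_2,u_3,u_4\}\in\mathcal{B}_\Delta$ by first choosing $u_2,u_3$ and then bounding the number of admissible $u_4$. There are at most $O(\Delta^4)$ choices for the pair $(u_2,u_3)$ with $d_2\le d_3\le\Delta$. Since $0,u_2,u_3$ are distinct points of a cyclic quadrilateral, they are not collinear. They therefore determine a unique circle $C$, and $u_4$ must be a lattice point on $C$. So it suffices to show that every circle through three lattice points in $[-\Delta,\Delta]^2$ contains $O_\varepsilon(\Delta^{\varepsilon})$ lattice points. Note that $d_4$ is unconstrained, so we cannot simply use the bound on lattice points inside a box.

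To bound lattice points on $C$, I will write down the circle explicitly. Its centre is $c=(p/q,r/q)$, and solving the linear system shows that we may take $q=2\det(u_2,u_3)$, a nonzero integer with $|q|\lesssim\Delta^2$. The numerators $p,r$ are integers with $|p|,|r|\lesssim\Delta^3$. The squared radius is $|c|^2=(p^2+r^2)/q^2$, because $0\in C$. A lattice point $u=(a,b)$ lies on $C$ exactly when $(qa-p)^2+(qb-r)^2=p^2+r^2$. Each lattice point on $C$ thus gives a representation of the integer $N\defeq p^2+r^2$ as a sum of two squares. Here $N\ge1$, since $c\ne 0$ (the circle passes through $0$, so its radius is positive), and $N\lesssim\Delta^6$. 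The map $u\mapsto(qa-p,qb-r)$ is injective. Hence the number of lattice points on $C$ is at most $r_2(N)$.

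The divisor bound gives $r_2(N)\le 4\tau(N)=O_\varepsilon(N^{\varepsilon})=O_\varepsilon(\Delta^{6\varepsilon})$. Combining the two steps gives $|\mathcal{B}_\Delta|\lesssim_\varepsilon \Delta^{4}\cdot\Delta^{6\varepsilon}$, and renaming $\varepsilon$ yields the lemma.

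The only delicate point is verifying that the centre has denominators of polynomial size in $\Delta$. This follows from Cramer's rule applied to the system $2\langle c,u_i\rangle=|u_i|^2$ for $i=2,3$. It keeps $N$ polynomially bounded, so the divisor bound loses only $\Delta^{\varepsilon}$. The asymmetry condition is not needed, since the bound covers all cyclic quadruples.
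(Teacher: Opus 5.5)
Your proposal is correct and follows the same route as the paper: you count the $O(\Delta^4)$ pairs $(u_2,u_3)$, then bound the number of lattice points on the circumcircle of $0,u_2,u_3$ by $O_\varepsilon(\Delta^{\varepsilon})$ using the divisor bound. Your explicit reduction to $(qa-p)^2+(qb-r)^2=p^2+r^2$ with $q=2\det(u_2,u_3)$ and $r_2(N)\le 4\tau(N)$ supplies, in a self-contained way, the details the paper omits; the paper instead cites a general bound for lattice points on ellipses.
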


\begin{proof}
    The result holds trivially for $\Delta < 1$ since $d_3 \ge 1$ for every pinned quadrilateral in $\mathcal{A}$ and so $|\mathcal{B}_\Delta| = 0$. So we may assume $\Delta \ge 1$. We first choose linearly independent $u_2, u_3 \in \mathbb{Z}^2 \setminus \{0\}$ with $\|u_2\|_\infty \le \|u_3\|_\infty \le \Delta$, giving $O(\Delta^2)$ choices for each. Now the number of choices for $u_4$ is bounded above by the number of lattice points on the circumcircle of the triangle with vertices $0$, $u_2$, and $u_3$, which is in turn bounded above by $O_\eps(\Delta^\eps)$. The last bound is seen by writing down the equation of the circumcircle and using the well-known bound (see~\cite[Equation~11.9]{Iwa}) on the number of lattice points on an ellipse in conjunction with the divisor bound~\cite[Theorem~317]{HW}. This follows a template similar to that of the proof of \cite[Lemma~3.1]{GGK}, so we omit the exact details here. Multiplying the number of choices for $u_2$, $u_3$, and $u_4$ yields the desired result.
\end{proof}

Let $S = \{0, u_2, u_3, u_4\} \in \mathcal{A}$. Consider the translate $S + v \defeq \{v, v + u_2, v + u_3, v + u_4\}$ of $S$ by a nonzero vector $v \in \mathbb{Z}^2$. We say that the translate $S + v$ is \emph{well-oriented} if
\begin{equation*}
    \|v\|_\infty = \min_{w \in S + v} \|w\|_\infty.
\end{equation*}
Let $V_S$ denote the set of $v \in \N^2$ such that the translate $S + v$ is contained in $\N^2$ and is well-oriented. Further, let $Z(S)$ denote the number of well-oriented translates of $S$ contained inside the random set $Q$. In the following lemma, we bound the expected value of $Z(S)$.

\begin{lemma}
\label{lem:shape}
    For every pinned quadrilateral $S = \{0, u_2, u_3, u_4\} \in \mathcal{A}$, we have
    \begin{equation*}
        \E Z(S) \lesssim \alpha^4 \left(\frac{\log d_3 + 1}{d_3 d_4}\right).
    \end{equation*}
\end{lemma}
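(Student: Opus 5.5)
By linearity of expectation and independence,
\begin{equation*}
    \E Z(S) = \sum_{v \in V_S} p_v\, p_{v+u_2}\, p_{v+u_3}\, p_{v+u_4} = \alpha^4 \sum_{v \in V_S} \frac{1}{\|v\|_\infty \|v+u_2\|_\infty \|v+u_3\|_\infty \|v+u_4\|_\infty}.
\end{equation*}
The plan is to bound each factor from below in terms of $r \defeq \|v\|_\infty$ and the $d_i$, and then sum over $v$ by grouping according to $r$. There are at most $2r-1$ vectors $v \in \N^2$ with $\|v\|_\infty = r$.

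The key elementary input is well-orientedness. It gives $\|v+u_i\|_\infty \ge r$ for each $i$. The triangle inequality gives $\|v+u_i\|_\infty \ge d_i - r$. Combining the two, $\|v+u_i\|_\infty \ge \max\{r, d_i - r\} \ge \max\{r, d_i\}/2$. We discard the factor for $u_2$ using only $\|v+u_2\|_\infty \ge r$. The summand is then at most $O\bigl(r^{-2}\max\{r,d_3\}^{-1}\max\{r,d_4\}^{-1}\bigr)$, so
\begin{equation*}
    \E Z(S) \lesssim \alpha^4 \sum_{r \ge 1} r \cdot \frac{1}{r^2 \max\{r,d_3\}\max\{r,d_4\}}.
\end{equation*}

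We split the sum over $r$ into three ranges, using $d_3 \le d_4$.
\begin{itemize}
    \item For $r \le d_3$, the terms are $\frac{1}{r d_3 d_4}$, and they sum to $O\bigl(\frac{\log d_3 + 1}{d_3 d_4}\bigr)$.
    \item For $d_3 < r \le d_4$, the terms are $\frac{1}{r^2 d_4}$, and they sum to $O\bigl(\frac{1}{d_3 d_4}\bigr)$.
    \item For $r > d_4$, the terms are $r^{-3}$, and they sum to $O(d_4^{-2}) \le O\bigl(\frac{1}{d_3 d_4}\bigr)$.
\end{itemize}
Adding these gives the claimed bound. Note that $d_3 \ge 1$ always, since $u_2, u_3$ are distinct nonzero lattice points, so the logarithm is well defined.

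The only step needing care is the lower bound $\|v+u_i\|_\infty \gtrsim \max\{r, d_i\}$. This is exactly where well-orientedness is essential: without it, a translate could have a vertex near the origin with large weight, and the $\log$ factor would not suffice. Once that lower bound is in hand, the rest is routine dyadic-style summation. The logarithm comes solely from the harmonic sum in the range $r \le d_3$, which is why the bound has $\log d_3$ rather than a power saving.
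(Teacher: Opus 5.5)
Your proposal is correct and follows the paper's proof: well-orientedness plus the triangle inequality give $\|v+u_i\|_\infty \ge \max\{r,d_i\}/2$, you group the sum by $r=\|v\|_\infty$ with $O(r)$ vectors per shell, and the logarithm comes from the harmonic sum over $r \le d_3$. The only cosmetic difference is in the fourth factor. You keep $\max\{r,d_4\}$ and split into three ranges, whereas the paper simply uses $d_4/2$ there and splits into the two ranges $r\le d_3$ and $r>d_3$.
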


\begin{proof}
    Set $u_1 = 0$. By Tonelli's theorem, we have
    \begin{equation}
    \label{eqn:exp}
        \E Z(S) = \sum_{v \in V_S} \prod_{i=1}^4 \frac{\alpha}{\|v + u_i\|_\infty} = \alpha^4 \sum_{v \in V_S} \prod_{i=1}^4 \frac{1}{\|v + u_i\|_\infty}.
    \end{equation}
    Let $v \in V_S$. Define $r(v) \defeq \|v\|_\infty$ and $x_i(v) \defeq v + u_i$ for $i \in [4]$. Since $S + v$ is well-oriented, we have $\|x_i(v)\|_\infty \ge r(v)$ for every $i \in [4]$. Further, for $i \in \{2,3,4\}$, the triangle inequality also gives $\|x_i(v)\|_\infty \ge d_i - r(v)$, which implies
    \begin{equation*}
        \|x_i(v)\|_\infty \ge \max\{r(v),d_i-r(v)\} \ge \frac{1}{2} \max\{r(v),d_i\}.
    \end{equation*}
    Using the lower bound $r(v)$ for $\|x_1(v)\|_\infty$ and $\|x_2(v)\|_\infty$, $\frac{1}{2} \max\{r(v), d_3\}$ for $\|x_3(v)\|_\infty$, and $d_4/2$ for $\|x_4(v)\|_\infty$, we obtain
    \begin{equation*}
        \prod_{i=1}^4 \frac{1}{\|x_i(v)\|_\infty} \le \frac{4}{r(v)^2d_4\max\{r(v),d_3\}}.
    \end{equation*}
    Using the above estimate together with the bound $|\{v \in V_S: \|v\|_\infty = r\}| \le 2r$ in \eqref{eqn:exp}, we obtain
    \begin{equation*}
        \E Z(S) \le \alpha^4 \sum_{r = 1}^\infty (2r) \cdot \frac{4}{r^2 d_4 \max\{r,d_3\}} = \frac{8 \alpha^4}{d_4} \sum_{r = 1}^{\infty} \frac{1}{r\max\{r,d_3\}}.
    \end{equation*}
    Breaking the sum on the right-hand side into two parts, corresponding to $r \le d_3$ and $r > d_3$, yields
    \begin{equation*}
        \E Z(S) \le \frac{8 \alpha^4}{d_4} \left(\frac{1}{d_3} \sum_{r = 1}^{d_3} \frac{1}{r} + \sum_{r = d_3 + 1}^{\infty} \frac{1}{r^2}\right) \lesssim \alpha^4 \left(\frac{1 + \log d_3}{d_3 d_4}\right).
    \end{equation*}
    This finishes the proof of the lemma.
\end{proof}

\begin{proof}[Proof of Lemma~\ref{lem:main} (e)]
    We wish to bound the expected number of asymmetric cyclic quadruples in $Q \cap B_T$. Each such quadruple corresponds to a well-oriented translate of some pinned quadrilateral in $\mathcal{A}_{2^T}$ that appears in $Q$. Therefore, we have
    \begin{equation}
    \label{eqn:amain}
        \E Y^\mathrm{asym}_T \le \sum_{S \in \mathcal{A}_{2^T}} \E Z(S).
    \end{equation}
    For $0 \le j \le i \le T$, define $\mathcal{C}_{i,j} \defeq (\mathcal{A}_{2^i} \setminus \mathcal{A}_{2^{i-1}}) \cap (\mathcal{B}_{2^j} \setminus \mathcal{B}_{2^{j-1}})$. Then $(\mathcal{C}_{i,j})_{0 \le j \le i \le T}$ is a partition of $\mathcal{A}_{2^T}$. Using this partition in \eqref{eqn:amain}, we obtain
    \begin{equation}
    \label{eqn:ijbound}
        \E Y^\mathrm{asym}_T \le \sum_{i=0}^T \sum_{j=0}^i \sum_{S \in \mathcal{C}_{i,j}} \E Z(S).
    \end{equation}
    For each $\eps > 0$, Lemmas~\ref{lem:HK} and \ref{lem:count} imply
    \begin{equation}
    \label{eqn:Cij}
        |\mathcal{C}_{i,j}| \le \min \{|\mathcal{A}_{2^i}|, |\mathcal{B}_{2^j}|\} \lesssim_\eps \min\{(2^i)^{76/29+\eps}, (2^j)^{4+\eps}\} \lesssim_\eps 2^{i(57/29+3\eps/4) + j(1+\eps/4)},
    \end{equation}
    where the last inequality follows from the fact that $\min \{a,b\} \le a^{3/4} b^{1/4}$ for any $a, b > 0$. Also, for each $S \in \mathcal{C}_{i,j}$, we have $2^{j-1} < d_3 \le 2^j$ and $2^{i-1} < d_4 \le 2^i$. Therefore, Lemma~\ref{lem:shape} implies
    \begin{equation}
    \label{eqn:EZS}
        \frac{\E Z(S)}{\alpha^4} \lesssim \frac{j \log 2 + 1}{2^{i-1} 2^{j-1}} \lesssim \frac{j+1}{2^{i+j}}.
    \end{equation}
    Using \eqref{eqn:Cij} and \eqref{eqn:EZS} in \eqref{eqn:ijbound} and then using the estimate $T+1 \lesssim_\eps 2^{\eps T/3}$, we obtain
    \begin{align*}
        \E Y^\mathrm{asym}_T \lesssim_\eps \alpha^4 \sum_{i=0}^T \sum_{j=0}^i 2^{28i/29 + \eps(3i+j)/4} (j+1) \lesssim_\eps \alpha^4 2^{28T/29 + \eps T} (T+1)^3 \lesssim_\eps \alpha^4 2^{T(28/29 + 2\eps)}.
    \end{align*}
    Choosing $\eps \in (0, (1/29-0.02)/2)$ gives the desired result.
\end{proof}

\bibliographystyle{amsplain}
\bibliography{references}

\end{document}